\documentclass{elsarticle}
\usepackage{amsfonts, amsmath, amssymb, amsthm}
\usepackage[T1]{fontenc} %
\usepackage[latin1]{inputenc} %
\usepackage{graphicx}
\usepackage{subfigure}
\usepackage{color, fancybox, calc}
\usepackage{latexsym,amssymb,amsmath}
\usepackage{fullpage}
\usepackage{array,url, appendix}
\usepackage{refcount}

\newtheorem{theorem}{Theorem}
\newtheorem{lemma}[theorem]{Lemma}

\renewcommand{\deg}{\mathrm{deg}}

\begin{document}
\title{The Erd\H{o}s-Hajnal Conjecture for Paths and Antipaths}

\author[lirmm]{N. Bousquet}
\ead{nicolas.bousquet@lirmm.fr}

\author[ens]{A. Lagoutte\fnref{ANR} \corref{cor1}}
\ead{aurelie.lagoutte@ens-lyon.fr}

\author[ens]{S. Thomass\'e\fnref{ANR}}
\ead{stephan.thomasse@ens-lyon.fr}

\cortext[cor1]{Corresponding author}

\fntext[ANR]{These authors were partially supported by ANR Projet \textsc{Stint} under Contract \textsc{ANR-13-BS02-0007}.}

\address[lirmm]{AlGCo project-team, CNRS, LIRMM, 
161 rue Ada, 34392 Montpellier France.}
\address[ens]{LIP, UMR 5668 ENS Lyon - CNRS - UCBL - INRIA, Universit\'e de Lyon, 46, all\'ee d'Italie, 69364 Lyon France.}

\date{\today}                                 %

\begin{frontmatter}

\begin{abstract} 
We prove that for every $k$, there exists $c_k>0$ such that 
every graph $G$ on $n$ vertices with no induced path $P_k$ or its complement $\overline {P_k}$
contains a clique or a stable set of size $n^{c_k}$.
\end{abstract}

\begin{keyword}
Erd\H{o}s-Hajnal \sep path \sep antipath \sep Ramsey
\end{keyword}

\end{frontmatter}

An \emph{$n$-graph} is a graph on $n$ vertices. For every vertex $x$, $N(x)$ denotes the neighborhood of $x$, that is the set of vertices $y$ such that $xy$ is an edge. The degree $\deg(x)$ is the size of $N(x)$. In this note, we only consider classes of graphs that are closed under
induced subgraphs. Moreover a class $\mathcal C$  is \emph{strict} if it does not contain all graphs. It is said  to have the \emph{(weak) Erd\H{o}s-Hajnal property} if there 
exists some $c>0$ such that every graph of $\mathcal C$ contains a clique or a stable set 
of size $n^c$ where $n$ is the size of $G$. The Erd\H{o}s-Hajnal conjecture~\cite{Erdo89} asserts that every strict class of graphs
has the Erd\H{o}s-Hajnal property; see~\cite{Chud13} for a survey. This fascinating 
question is open even for graphs not
inducing a cycle of length five. When excluding a single graph $H$, Alon, Pach and Solymosi
showed in~\cite{Alon01} that it suffices to consider \emph{prime} $H$, namely graphs without nontrivial modules (a \emph{module} is a subset $V'$ of vertices such that for every $x,y\in V'$, $N(x)\setminus V'=N(y)\setminus V'$).
A natural approach is then to study classes of graphs with intermediate difficulty, hoping to get a proof scheme which could be extended. A natural prime candidate to forbid is certainly 
the path. Unfortunately, even excluding the path on five vertices seems already hard. Chudnovsky and Zwols studied the class ${\mathcal C}_k$ of graphs not inducing the path $P_k$ on $k$ 
vertices or its complement $\overline {P_k}$. They proved the Erd\H{o}s-Hajnal property for $P_5$ and $\overline {P_6}$-free graphs~\cite{Chud12}. 
This was extended for $P_5$ and $\overline {P_7}$-free graphs by Chudnovsky and Seymour~\cite{ChudSey13}. Moreover structural results have been provided for $\mathcal{C}_5$ \cite{ChudMac13, ChudMacPen13}. We show in this note 
that for every fixed $k$, the class ${\mathcal C}_k$ has the Erd\H{o}s-Hajnal property.
An $n$-graph is an {\it $\varepsilon$-stable set} if it has at most $\varepsilon {n\choose 2}$ edges. The complement of an $\varepsilon$-stable set is 
an {\it $\varepsilon$-clique}. 
%A stronger version of the following result was proved 
%by R\"odl~\cite{Rodl86}. Note that another proof was then provided by Fox and %Sudakov \cite{Fox08}, with no use of the regularity lemma:
Fox and Sudakov \cite{Fox08} proved the following:

\begin{theorem}[\cite{Fox08}] \label{reg}
For every positive integer $k$ and every $\varepsilon \in (0,1/2)$, there exists $\delta>0$ such that every $n$-graph 
$G$ satisfies one of the following:
\begin{itemize}
\item $G$ induces all graphs on $k$ vertices.
\item $G$ contains an $\varepsilon$-stable set of size at least $\delta n$.
\item $G$ contains an $\varepsilon$-clique of size at least $\delta n$.
\end{itemize}
\end{theorem}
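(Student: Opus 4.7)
The plan is to derive Theorem~\ref{reg} as a direct corollary of R\"odl's theorem in its stronger $H$-free form (the ``stronger version'' alluded to in the preamble): for every graph $H$ and every $\varepsilon>0$, there exists $\delta_H>0$ such that every $H$-free $n$-graph contains either an $\varepsilon$-stable set or an $\varepsilon$-clique of size at least $\delta_H n$. The statement we want weakens the $H$-free hypothesis to the condition that $G$ merely fails to induce \emph{all} graphs on $k$ vertices, so the proof is a quantifier swap powered by finiteness.

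Concretely, let $\mathcal{H}_k$ denote the (finite) set of isomorphism classes of graphs on $k$ vertices, of cardinality at most $2^{\binom{k}{2}}$. For each $H \in \mathcal{H}_k$, apply R\"odl's theorem with parameter $\varepsilon$ to obtain a positive constant $\delta_H$, and set $\delta := \min_{H \in \mathcal{H}_k} \delta_H$; this is positive as a minimum of finitely many positive reals. Now, given an arbitrary $n$-graph $G$, either $G$ induces every member of $\mathcal{H}_k$, placing us in the first outcome, or there exists some $H^\star \in \mathcal{H}_k$ which does not appear as an induced subgraph of $G$. In the latter case $G$ is $H^\star$-free, so R\"odl's theorem furnishes an $\varepsilon$-stable set or $\varepsilon$-clique of size at least $\delta_{H^\star}n \geq \delta n$, landing us in the second or third outcome.

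The main ``step'' is therefore the reduction from the universal quantifier inside the first bullet (``$G$ avoids some $k$-vertex graph'') to the existential situation needed to invoke R\"odl (``$G$ is $H$-free for a specific $H$''), and this is handled by the min-over-finitely-many trick. There is essentially no obstacle here since R\"odl's theorem is used as a black box; a fully self-contained derivation would instead build R\"odl's theorem from scratch via Szemer\'edi's regularity lemma, but given the explicit attribution in the preamble this is not the intended route.
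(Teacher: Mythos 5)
Your proof is correct, but it takes a genuinely different route from the paper's. You treat the theorem as a formal weakening of R\"odl's theorem and dispose of it with the finite-minimum trick over the (at most $2^{\binom{k}{2}}$) isomorphism classes of $k$-vertex graphs; this is logically sound, since failing to induce all $k$-vertex graphs means being $H$-free for some specific $H$, the minimum of finitely many positive $\delta_H$ is positive, and the paper itself vouches for the stronger single-$H$ statement by citing R\"odl. The paper, however, gives a self-contained sketch that essentially reproves the result from Szemer\'edi's regularity lemma: take an $\varepsilon'$-regular partition with between $k''$ and $M$ parts, pass to $k''$ parts that are pairwise $\varepsilon'$-regular, apply Ramsey's theorem to the three-colouring of pairs of parts by density (below $\varepsilon/2$, intermediate, above $1-\varepsilon/2$) to get $k'$ parts with all pairs in one class, and observe that the low (resp.\ high) case yields an $\varepsilon$-stable set (resp.\ $\varepsilon$-clique) of size $\delta n$ with $\delta=k'/M$, while the intermediate case lets one embed every $k$-vertex graph by choosing one vertex per part. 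Your approach buys brevity at the cost of using R\"odl's theorem as a black box; the paper's buys self-containedness, and its argument is essentially the proof of R\"odl's theorem anyway, so nothing is lost by your shortcut beyond the explicit mechanism. If explicit constants matter, note as the paper does that Fox and Sudakov later gave a regularity-free proof yielding $\delta=2^{-15k(\log(1/\varepsilon))^2}$, which neither your reduction nor the regularity sketch provides.
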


%\begin{proof} (sketch) Note that by choosing $\delta $ small enough, we can assume 
%that $G$ is arbitrarily large, since a single vertex is certainly an $\varepsilon$-stable set. 
%Let $k'$ and $k''$ be two integers such that 
%$\max (k,2/\varepsilon)\ll k'\ll k''$. Select now some  $\varepsilon ' >0$ such that 
%$\varepsilon ' \ll \min((1/k'')^2, (\varepsilon/2)^k)$. Applying Szemer\'edi's lemma, there exists 
%$M$ for which every large enough graph has an $\varepsilon '$-regular partition 
%with at least $k''$ parts and at most $M$ parts. We consider such an $\varepsilon '$-regular partition $P$ of $G$.
%Since $\varepsilon ' $ is small enough, there are $k''$ classes of $P$ forming 
%pairwise $\varepsilon '$-regular pairs. Since $k''\gg k'$ (precisely, providing that 
%$k''$ is at least Ramsey($k',k',k'$)), we can find $k'$ of them 
%with all pairwise densities, either less than $\varepsilon /2$, or between $\varepsilon /2$
%and $1-\varepsilon /2$, or more than $1-\varepsilon /2$. The first case gives 
%an $\varepsilon$-stable set of size $\delta n$, where $\delta$ is $k'/M$.
%The last case gives an $\varepsilon$-clique of size $\delta n$. Since $k\leq k'$,
%the intermediate case provides $k$ parts such that all pairs are $\varepsilon '$-regular 
%with densities between $\varepsilon /2$
%and $1-\varepsilon /2$. Thus one can induce all possible graphs by choosing one
%vertex in each of the parts since $\varepsilon '\ll (\varepsilon/2)^k$.
%\end{proof}

Note that a stronger result was previously showed by R\"odl~\cite{Rodl86} using Szemer\'edi's regularity lemma, but Fox and Sudakov's proof provides a much better quantitative estimate  ($\delta=2^{-ck (\log 1/\varepsilon)^2}$ for some constant $c$). They further conjecture that a polynomial estimate should hold, which would imply the Erd\H{o}s-Hajnal conjecture.

In a graph $G$, a \emph{biclique of size $t$} is a (not necessarily induced) complete bipartite subgraph $(X,Y)$ such that both $ |X|, |Y|\geq t$. Observe that it does not require any condition
inside $X$ or inside $Y$.
 Erd\H{o}s, Hajnal and Pach proved in~\cite{Erdo00} that 
for every strict class $\mathcal C$, there exists some $c>0$ such that for every  $n$-graph $G$ in
$\mathcal C$, $G$ or its complement $\overline{G}$ contains a biclique of size $n^c$. This "half" version
of the conjecture was improved to a "three quarter" version by Fox and Sudakov~\cite{Fox09}, 
where they show the existence of a polynomial size stable set or biclique.
Following the notations of \cite{FoxPach08}, a class $\mathcal{C}$ of graphs has the \emph{strong Erd\H{o}s-Hajnal property} if there exists a constant $c$ such that for every $n$-graph $G$ in $\mathcal{C}$, $G$ or $\overline{G}$ contains a biclique of size $cn$.
It was proved that having the strong Erd\H{o}s-Hajnal property implies having the (weak) Erd\H{o}s-Hajnal property:

\begin{theorem} [\cite{Alon05, FoxPach08}]\label{bipartite}
If $\mathcal C$ is a class of graphs having the strong Erd\H{o}s-Hajnal property,
then $\mathcal C$ has the weak Erd\H{o}s-Hajnal property.
\end{theorem}

\begin{proof} (sketch)
Let $c$ be the constant of the strong Erd\H{o}s-Hajnal property, meaning that for every $n$-graph $G$ in $\mathcal{C}$, $G$ or $\overline{G}$ contains a biclique of size $cn$.
Let $c'>0$ be such that $c^{c'}\geq 1/2$. We prove
by induction that every $n$-graph $G$ in $\mathcal C$ induces a $P_4$-free
graph of size $n^{c'}$. By our hypothesis on $\mathcal C$, there
exists, say, a biclique $(X,Y)$ of size $cn$ in $G$. Applying the induction hypothesis inside both $X$ and $Y$,
we form a $P_4$-free graph on $2(cn)^{c'}\geq n^{c'}$ vertices. The 
Erd\H{o}s-Hajnal property of $\mathcal C$ follows from the fact that 
every $P_4$-free $n^{c'}$-graph has a clique or a stable set 
of size at least $n^{c'/2}$.
\end{proof}

We now prove our main result. The key lemma is an adaptation of Gy\'arf\'as'
proof of the $\chi$-boundedness of $P_k$-free graphs, see~\cite{Gyar87}.

%For this purpose, let $f: \mathbb{N}\setminus \{0,1\} \times (0,1) \to [0,1]$ be a two variables function recursively defined by:
%\begin{itemize}
%\item $f(2,\varepsilon)= \frac{1- \varepsilon}{4}$ for every $\varepsilon$.
%\item $f(k, \varepsilon)=\frac{1-\varepsilon}{2}f(k-1,\frac{2\varepsilon}{1-\varepsilon} ) $ for every $\varepsilon$ and $k>2$.
%\end{itemize}
%Observe that for every $k\geq 2$ and for every $\varepsilon$, $f(k, \varepsilon)\leq \frac{1- \varepsilon}{4}$.

\begin{lemma} \label{lemme chemin}
For every $k\geq 2$, there exists $\varepsilon_k>0$ and $c_k$ (with $0<c_k\leq 1/2$) such that every connected $n$-graph $G$ with $n\geq 2$ satisfies one of the following:

\begin{itemize}
\item There exists a vertex of degree more than $ \varepsilon_k n$.
\item For every vertex $v$, $G$ contains an induced $P_k$ starting at $v$.
\item The complement $\overline{G}$ of $G$  contains a biclique of size $c_k n$.
\end{itemize}

\end{lemma}

\begin{proof}
%If $n=2$, the statement is trivial assuming the obvious condition $c_k\leq 1/2$ for the later chosen $c_k$. Thus in the following, we assume $n\geq 3$ and 
We proceed by induction on $k$. For $k=2$, since $G$ is connected, every vertex is the endpoint of an edge (that is, a $P_2$). Thus we can arbitrarily define $\varepsilon_2=c_2=1/2$.

If $k>2$, let $\varepsilon_k= \frac{\varepsilon_{k-1}}{(2+\varepsilon_{k-1})}$ and $c_k=\frac{c_{k-1}(1-\varepsilon_k)}{2}$. Let us assume that the first item is false. We will show that the second or the third item is true. Let $v_1$ be any vertex and $S=V(G)\setminus (N(v_1)\cup \{v_1\})$. The size $s$ of $S$ is at least $ (1-\varepsilon_k)n-1$. If $S$ have only \emph{small} connected components, meaning of size at most $s/2$, then one can divide the connected components into two parts with at least $(s+1)/4$ vertices each, and no edges between both parts. This gives in $\overline{G}$ a biclique of size $(s+1)/4\geq \frac{(1-\varepsilon_k)n}{4}$, thus of size at least $c_k n$ since $c_k\leq \frac{1-\varepsilon_k}{4}$.
Otherwise, $S$ has a \emph{giant} connected component $S'$, meaning of size $s'$ more than $s/2$. Let $v_2$ be a vertex adjacent both to $v_1$ and to some vertex in $S'$. Observe that $v_2$ exists since $G$ is connected.
Consider now the graph $G_2$ induced by $S'\cup \{v_2\}$.
The maximum degree in $G_2$ is still at most $\varepsilon_k n= \varepsilon_{k-1} (1-\varepsilon_k) n/2 \leq \varepsilon_{k-1} (s'+1)$.
By the induction hypothesis, either the second or the third item is true for $G_2$ with parameter $k-1$. The second item gives an induced $P_{k-1}$ in $G_2$  starting at $v_2$, thus an induced $P_k$ in $G$  starting at $v_1$. The third item gives a biclique of size $c_{k-1} |G_2|$ in $\overline{G_2}$.
 Since $|G_2|= s'+1 \geq \frac{1-\varepsilon_k}{2} n$, this gives a biclique of size at least $\frac{c_{k-1}(1-\varepsilon_k)}{2} n=c_k n$
 %. Defining $c_k=\min(\frac{c_{k-1}(1-\varepsilon_k)}{2},\frac{1-\varepsilon_k}{4} )$  
 and concludes the proof.
%does not contain an induced $P_{k-1}$ with starting point $v_2$, for otherwise $G$ would have an induced $P_k$ with starting point $v_1$. Moreover, the maximum degree in $S'\cup \{v_2\}$ is still $\varepsilon n\geq \varepsilon' s'$ with $\varepsilon'=2\varepsilon/(1-\varepsilon)$. 
%
%By induction hypothesis, there exists $c'$ (depending on $\varepsilon'$ and $k-1$, thus on $\varepsilon$ and $k$) such that $\overline{G_2}$ contains a biclique of size $c' (s'+1)$. Observe now that $c' (s'+1)\geq c' s' \geq \frac{c' 1-\varepsilon}{2} n$. Defining $c=\frac{c' 1-\varepsilon}{2}\leq c'\leq \frac{1- \varepsilon}{4}$ concludes the proof in either case.
\end{proof}

\begin{theorem} \label{path}
For every $k\geq 2$, $\mathcal{C}_k$ has the strong Erd\H{o}s-Hajnal property.
Thus, by Theorem~\ref{bipartite},
the class  ${\mathcal C}_k$ has the (weak) Erd\H{o}s-Hajnal property.

%For every $k$, there is a $c_k>0$, such that every graph in ${\mathcal C}_k$ 
%contains an empty or complete $c_kn$-bipartite graph. Thus, by Theorem~\ref{bipartite},
%the class  ${\mathcal C}_k$ has  the Erd\H{o}s-Hajnal property.

\end{theorem}

\begin{proof} Let $\varepsilon_k$ be as defined in Lemma \ref{lemme chemin}  and $\varepsilon=\varepsilon_k/8>0$. By Theorem~\ref{reg}, there exists
$\delta >0$ such that every graph $G$ not inducing 
$P_k$ or $\overline{P_k}$ does contain an $\varepsilon$-stable set
or an $\varepsilon$-clique of size at least $\delta n$. Free to consider the complement of $G$, we can assume that 
$G$ contains an $\varepsilon$-stable set $S_0$ of size $\delta n$.
We start by deleting in $S_0$ all the vertices
with degree in $S_0$ at least $2\varepsilon s_0$ where $s_0$ is the size of $S_0$. Since the average 
degree in $S_0$ is at most $\varepsilon s_0$, we do not delete more than half of the vertices.
We call $S$ the remaining subgraph which is a $4\varepsilon$-stable set 
of size $s\geq \delta n/2$ with maximum degree less than $4\varepsilon s$. 

Let $G_S$ be the graph induced by $S$. Our goal is to find a constant $c$ such that $\overline{G_S}$ have a biclique of size $cs$, which gives a biclique in $\overline{G}$ of size at least $ c\delta n/2$ and concludes the proof. 
Assume first that $G_S$ only has \emph{small} connected components, meaning of size less than $s/2$. Then one can partition the connected components of $G_S$ in order to get a biclique in $\overline{G_S}$ of size $s/4$.
Otherwise, $G_S$ has a connected component $S'$ of size $s'\geq  s/2$. The degree of every vertex in $S'$ is at most $8 \varepsilon s'=\varepsilon_k s'$, and $S'$ does not contain any induced $P_k$ since $G$ does not. By Lemma \ref{lemme chemin}, there exists a biclique of size $c_k s'\geq c_k s/2$ in the complement of the graph induced by $S'$, thus in $\overline{G_S}$.
%We now follow the lines of Gy\'arf\'as'
%proof of the $\chi$-boundedness of $P_k$-free graphs, see~\cite{Gyar87}.
%
%Let us fix some small $c>0$. Our goal is to prove that $S$ contains a $cs/2$-empty 
%bipartite graph. This would end our proof since $G$ itself would contain a $c\delta n/4$-empty 
%bipartite graph. For this, first observe that $S$ has a \emph{giant} connected component $C_1$,
%i.e. with size at least $(1-c)s$, for otherwise our goal can be achieved by partitioning 
%$S$ along some of its connected components. Pick a vertex $v_1$ in $C_1$.
%Our aim is to grow an induced path from $v_1$, which can be made arbitrary long by
%tuning $c$ and $\varepsilon$. We just show the next step. Let $S_1$ 
%be the set of non-neighbors 
%of $v_1$ in $C_1$. Observe that the size of $S_1$ is very close to the size of $S$, indeed it is at least $(1-c-2\varepsilon)s$.
%In particular, $S_1$ has also a giant component $C_2$. Now pick a neighbor $v_2$ of $v_1$ which has 
%a neighbor in $C_2$. By iterating the same argument we can grow a long
%induced path from $v_2$ inside $C_2$, and then extend it by $v_1$.
\end{proof}

\bibliographystyle{plain}

\end{document}